\newtheorem{thm}{Theorem}[section]
\newtheorem{defn}{Definition}[section]
\newtheorem{lem}{Lemma}[section]
\title
{$L_p$ Markov exponent of certain UPC sets}
\author{\normalsize Tomasz Beberok \\
\small Faculty of Mathematics and Computer Science, Jagiellonian University,\\
\small Lojasiewicza 6, 30-048 Krakow, Poland \\}
\date{}
\begin{document}

\begin{center}
  \textbf{$L_p$ Markov exponent of certain UPC sets}
\end{center}
\vskip1em
\begin{center}
  Tomasz Beberok
\end{center}

\vskip2em

\noindent \textbf{Abstract.} In this paper we give sharp $L_p$ Markov type inequalities for derivatives of multivariate polynomials for a wide family of UPC domains.
\vskip1em

\noindent \textbf{Keywords:}  Markov inequality; $L^p$ norms; Markov exponent; Cuspidal sets
\vskip1em
\noindent \textbf{AMS Subject Classifications:} primary 41A17, secondary 41A44; 41A63 \\

\section{Introduction}
\label{}
Let $\mathcal{P}_n(\mathbb{R}^m)$ be the class of all algebraic polynomials in $m$ variables with real coefficients of degree $n$. Further, let $C(\Omega)$ be the real space of all real valued continuous functions $f$ defined on a compact set $\Omega \subset \mathbb{R}^m$ with the norm $\|f\|_{C(\Omega)}:=\sup_{x \in \Omega} |f(x)|$, and let $L_{p}(\Omega)$, $1\leq p \leq \infty$, be the space of all Lebesgue-measurable functions $f$ on $\Omega \subset \mathbb{R}^m$ such that $$\|f\|_{L_{p}(\Omega)}:=(\int_{\Omega} |f(x)|^p  \, dx)^{1/p} < \infty \quad  \text{if} \quad 1\leq p < \infty,$$ and $L_{\infty}:=C(\Omega)$. Moreover, $\mathbb{N}=\{1,2,3,\ldots\}$ and $\mathbb{N}_0=\{0\} \cup \mathbb{N}$.
\begin{defn}
We say that a compact set  $\emptyset \neq E \subset \mathbb{R}^m$ satisfies $L_p$ Markov type inequality  (or: is a $L_p$ Markov set) if there exist $\kappa,C > 0$ such that, for each polynomial $P \in \mathcal{P} (\mathbb{R}^m)$ and each $\alpha \in \mathbb{N}_0^m$,
\begin{align}\label{MarkovLp}
  \left\| D^{\alpha} P \right\|_{L_p(E)}  \leq (C (\deg P)^{\kappa})^{| \alpha |} \|P\|_{L_p(E)},
\end{align}
where $D^{\alpha} P = \frac{ \partial^{| \alpha |} P  }{ \partial x_1^{\alpha_1} \ldots \partial x_m^{\alpha_m}   }$ and $| \alpha |= \alpha_1 + \cdots + \alpha_m$.
\end{defn}
Clearly, by iteration, it is enough to consider in the above definition multi-indices $\alpha$ with $|\alpha| = 1$. The inequality (\ref{MarkovLp}) is a generalization of the classical Markov inequality:
\begin{align*}
  \|P'\|_{C([-1,1])} \leq (\deg P)^2  \|P\|_{C([-1,1])}.
\end{align*}
The classical Markov inequality and its extensions for multivariate case play a central role in various approximation problems. \\ \indent
In this paper we shall consider the following problem:
\begin{center}
{\it For a given $L_p$ Markov set $E$ determine $\mu_p(E):=\inf\{\kappa : E \mbox{ satisfies } (\ref{MarkovLp}) \}$.}
\end{center}
The quantity $\mu_{p}(E)$ is called $L_p$ Markov exponent and was first considered by Baran and Ple\'{s}niak in \cite{BP} for $p=\infty$. For any compact set $E$ in $\mathbb{R}^m$ we have $\mu_p(E)\geq 2$. It is well known that for Locally Lipschitzian compact subsets of $\mathbb{R}^m$ and thus, in particular convex domains the $L_p$ Markov exponent is equal to $2$ (see \cite{G2}). If $E \subset \mathbb{R}^m$ is a $Lip\gamma$, $0<\gamma<1$ cuspidal domain, then  $\mu_{\infty}(E)=\frac{2}{\gamma}$ (see for instance, \cite{G1}, \cite{MB}, \cite{KS}). Recently, in \cite{Kroo}, for $1 \leq p <\infty$, the same exponent $\frac{2}{\gamma}$ as in $L_{\infty}$ norm was obtained for $Lip\gamma$, $0<\gamma<1$ cuspidal piecewise graph domains that are imbedded in an affine image of the $l_{\gamma}$ ball having one of its vertices on the boundary. Our goal is to establish $L_p$ Markov exponent of the following domains $$K:=\{(x,y) \in \mathbb{R}^2 : 0 \leq x \leq 1, \, ax^k \leq y \leq f(x)\},$$
where $k \in \mathbb{N}$, $k \geq 2$, $a>0$ and $f: [0,1] \rightarrow [0,\infty)$ is a convex function such that $f(1)>a$, $f'(0)=f(0)=0$, $f'(1)<\infty$, and $(f)^{1/k}$ is a concave function on the interval $(0,1)$. Since the domain $K$ is not a cuspidal piecewise graph domain, the $L_p$ Markov exponent cannot be obtained using the methods of \cite{Kroo}. In particular, $K$ has a cusp at the origin that cannot be connected to the interior of $K$ by a straight line. However, the point $(0,0)$ can be connected to the interior of $K$ by a polynomial curve. Thus the domain $K$ is a uniformly polynomially cuspidal subsets of $\mathbb{R}^2$ (briefly, UPC). Therefore, if $p=\infty$, the Markov exponent of $K$ for a wide family of convex functions is known (see \cite{MB},\cite{KS}). A very special case of $K$ was considered in \cite{TB} for $1 \leq p < \infty$.

\section{Auxiliary results}
In order to verify our main result we shall need some auxiliary statements. First we show that our set $K$ satisfies $L_p$ Markov type inequality, with exponent 2, for a derivative with respect to $x$.
\begin{lem}\label{lem1}
Let $f: [0,1] \rightarrow [0,\infty)$ be a convex function so that $f'(0)=f(0)=0$ and $f'(1)<\infty$. Suppose that there exist constants $a$ and $k$ such that $a>0$, $k \in \mathbb{N}$, $k \geq 2$, $f(1)>a$, and $(f)^{1/k}$ is a concave function on the interval $(0,1)$. Then, for every $1 \leq p < \infty$, there exists a positive constant $B_1$ such that, for each polynomial $P \in \mathcal{P} (\mathbb{R}^2)$,
\begin{align}\label{Ineqlem1}
  \left\| \frac{\partial P}{ \partial x} \right\|_{L_p(K)}  \leq B_1  (\deg P)^{2} \|P\|_{L_p(K)},
\end{align}
where $K:=\{(x,y) \in \mathbb{R}^2 : 0 \leq x \leq 1, \, ax^k \leq y \leq f(x)\}$.
\end{lem}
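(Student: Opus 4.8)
The plan is to reduce the two-dimensional inequality to a one-dimensional Markov-type estimate by slicing $K$ along vertical lines. For a fixed $x \in (0,1)$, the slice $K_x := \{y : ax^k \leq y \leq f(x)\}$ is an interval of length $\ell(x) := f(x) - ax^k$. Since $(f)^{1/k}$ is concave and $f(0)=0$, $f'(0)=0$, one has $f(x) \geq c x^k$ on a neighborhood of $0$ for no positive $c$ in general — but the hypothesis $f(1) > a$ together with concavity of $f^{1/k}$ forces $f^{1/k}(x) \geq f^{1/k}(1)\, x > a^{1/k} x$, hence $f(x) > a x^k$ for all $x \in (0,1]$, so $\ell(x) > 0$ on $(0,1]$ and in fact $\ell(x)$ is comparable to $f(x)$ near the cusp (since $ax^k = o(f(x))$). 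The key point is that the \emph{width in the $x$-direction} of $K$ is order $1$: $K$ projects onto $[0,1]$ and for each $x_0 \in [0,1]$ there is a horizontal segment inside $K$ of length bounded below — this is where I would need $f$ to be bounded, nondecreasing (which follows from convexity and $f'(0)=0$), so that for $x$ near $x_0$ the interval $[\,ax^k, f(x)]$ contains a common value; more carefully, fix any $y_0$ with, say, $y_0 = f(1)/2 > a$, and observe $\{(x,y_0) : x \in [x_*, 1]\} \subset K$ for suitable $x_* < 1$, while near the cusp one uses that on $[x, 2x]\cap[0,1]$ the slices overlap because $f$ is increasing.

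The main technical step is therefore: for each $x$, apply the classical one-variable $L_p$ Markov inequality on an interval to the polynomial $t \mapsto P(t, y)$ (degree $\leq n = \deg P$), but in the $x$-variable the relevant interval is not a single slice — I want to compare $\partial P/\partial x$ at $(x,y)$ with the $L_p$ norm of $P$ over a horizontal segment through $(x,y)$ staying in $K$. Since that segment has length bounded below by a constant $\delta > 0$ independent of $x$ (by the geometry just described), the one-dimensional $L_p$ Markov inequality on an interval of length $\geq \delta$ gives $|\partial P/\partial x (x,y)|^p \leq (C n^2 / \delta)^p \int_{\text{segment}} |P(t,y)|^p\, dt$ after the usual argument (Markov on $[-1,1]$ rescaled, then Nikolskii/Remez-type passage from sup to $L_p$ on the interval — or directly the known $L_p$ Markov inequality for intervals with exponent $2$). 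Integrating this over $(x,y) \in K$ and using Fubini, together with the fact that each horizontal line meets $K$ in a bounded number of such comparison segments (the projection multiplicity is controlled), yields $\|\partial P/\partial x\|_{L_p(K)}^p \leq (B_1 n^2)^p \|P\|_{L_p(K)}^p$.

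The delicate bookkeeping — and the step I expect to be the main obstacle — is the uniform lower bound $\delta$ on the length of an admissible horizontal segment through each point of $K$, \emph{including points with $x$ arbitrarily close to $0$}. At the cusp, a horizontal segment through $(x,y)$ with $ax^k \leq y \leq f(x)$ can only extend in the direction of increasing $x$ (toward the interior), and its length is limited by where $f$ rises above $y$ and by $x=1$; one must check, using convexity of $f$ and $f(1)>a$, that this length stays bounded below, e.g. by showing $f$ reaches the fixed level $f(1)/2$ at some $x_0 < 1$ independent of everything, so every point of $K$ with $x \leq x_0$ can be joined horizontally to the fixed vertical strip $\{x_0 \leq x \leq 1\}\cap K$, which itself is (contained in) a Lipschitz set on which exponent $2$ is classical. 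Handling the overlap/multiplicity so that Fubini does not cost more than a constant factor, and treating the endpoint behaviour at $x=1$ where $f'(1)<\infty$ keeps the boundary Lipschitz, completes the argument; the estimate is uniform in $p$ because the one-variable $L_p$ Markov constant on an interval is uniformly bounded.
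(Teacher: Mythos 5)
There is a genuine gap, and it sits exactly at the step you yourself flagged: the uniform lower bound $\delta$ for the length of a horizontal segment of $K$ through every point does not exist, and the claim that every point of $K$ with $x\le x_0$ can be joined horizontally to the strip $\{x_0\le x\le 1\}$ is false. Near the origin both boundary curves are tangent to the $x$-axis ($f(0)=f'(0)=0$, and $ax^k$ with $k\ge 2$), so $K$ pinches to a cusp whose horizontal extent shrinks to zero: the horizontal section of $K$ at height $y>0$ is the interval $[f^{-1}(y),(y/a)^{1/k}]$, and for a point $(x,y)\in K$ one has $y\le f(x)\le f(1)x$ (convexity and $f(0)=0$), hence the section length is at most $(f(1)x/a)^{1/k}\to 0$ as $x\to 0$; moreover joining $(x,y)$ horizontally to $\{t\ge x_0\}$ inside $K$ would force $y\ge a x_0^k$, which fails for all points of the cusp with $x$ small. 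Consequently the slicing argument produces a local Markov factor $n^2/\ell(x,y)$ that blows up near the cusp (for points near the lower boundary $\ell\sim x$), and even after discarding $x\le n^{-2}$ by a Remez-type step this yields an exponent strictly larger than $2$, not the bound $B_1(\deg P)^2$ asserted in the lemma. So the approach as written cannot be repaired by "delicate bookkeeping"; the geometric premise is wrong.

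The paper's proof circumvents precisely this obstruction. It first substitutes $y=s^k$, which turns $K$ into $\hat K=\{0\le x\le 1,\ \sqrt[k]{a}\,x\le s\le (f(x))^{1/k}\}$: the lower boundary becomes a straight line and the upper boundary a concave function, so the cusp becomes a genuine wedge, at the price of the Jacobian weight $ks^{k-1}$. The cuspidal part $\hat K\setminus\hat K_{x_0}$ is then swept by two families of parallel segments of uniformly bounded length, with slopes $\sqrt[k]{a}$ and $\alpha>\sqrt[k]{a}$ (concavity of $f^{1/k}$ guarantees these segments stay in $\hat K$); on each segment the restricted polynomial (of degree $\le k\deg P$) is estimated by the weighted univariate $L_p$ Markov inequality of Theorem 10.6 of \cite{Er} for the weight $(t+\sqrt[k]{a}\,x)^{k-1}$, respectively $(t+\alpha x)^{k-1}$. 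This gives bounds for the two directional derivatives $\partial_x P+\sqrt[k]{a}\,ks^{k-1}\partial_y P$ and $\partial_x P+\alpha ks^{k-1}\partial_y P$, and subtracting them eliminates $\partial P/\partial y$ and isolates $\partial P/\partial x$ with a constant involving $(\alpha-\sqrt[k]{a})^{-1}$; the Lipschitz part $K_{x_0}$ is handled by Goetgheluck's result \cite{G2}. If you wish to keep a slicing strategy, you must slice along directions (or curves) adapted to the cusp and control the resulting weight, which is exactly what the change of variables plus oblique segments accomplishes; horizontal lines cannot work for this set.
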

\begin{proof}
Let $x_0 \in (0,1)$. Define $K_{x_0}=\{(x,y) \in \mathbb{R}^2 : x_0 \leq x \leq 1, \, ax^k \leq y \leq f(x)\}$. Then the domain $K_{x_0}$ is locally Lipschitzian compact subsets of $\mathbb{R}^2$. Hence by the main result of \cite{G2}, there exists a positive constant $A$ such that
\begin{align*}
  \left\| \frac{\partial P}{ \partial x} \right\|_{L_p(K_{x_0})} \leq A (\deg P)^2 \left\| P \right\|_{L_p(K_{x_0})},
\end{align*}
for each polynomial $P \in \mathcal{P} (\mathbb{R}^2)$. Keeping $x_0$ fixed in $(0,1)$, according to the inequality above, it is enough to verify that there exists a positive constant $A_1$ such that
\begin{align*}
  \left\| \frac{\partial P}{ \partial x} \right\|_{L_p(K\setminus K_{x_0})} \leq A_1 (\deg P)^2 \left\| P \right\|_{L_p(K)},
\end{align*}
for each polynomial $P \in \mathcal{P} (\mathbb{R}^2)$.    The value of $x_0$ will be provided later. \newline \indent
Using the change of variables $y = s^k$, we have
\begin{align*}
  \left\| \frac{\partial P}{ \partial x} \right\|^p_{L_p(K)}= \int_{\hat{K}}  \left| \frac{\partial P}{ \partial x} (x,s^k) \right|^p ks^{k-1} \, dx ds,
\end{align*}
~where $\hat{K}=\{(x,s) \in \mathbb{R}^2 : 0 \leq x \leq 1, \, \sqrt[k]{a}x \leq s \leq (f(x))^{1/k}\}$.
\begin{figure}
  \centering
  \includegraphics[scale=0.6]{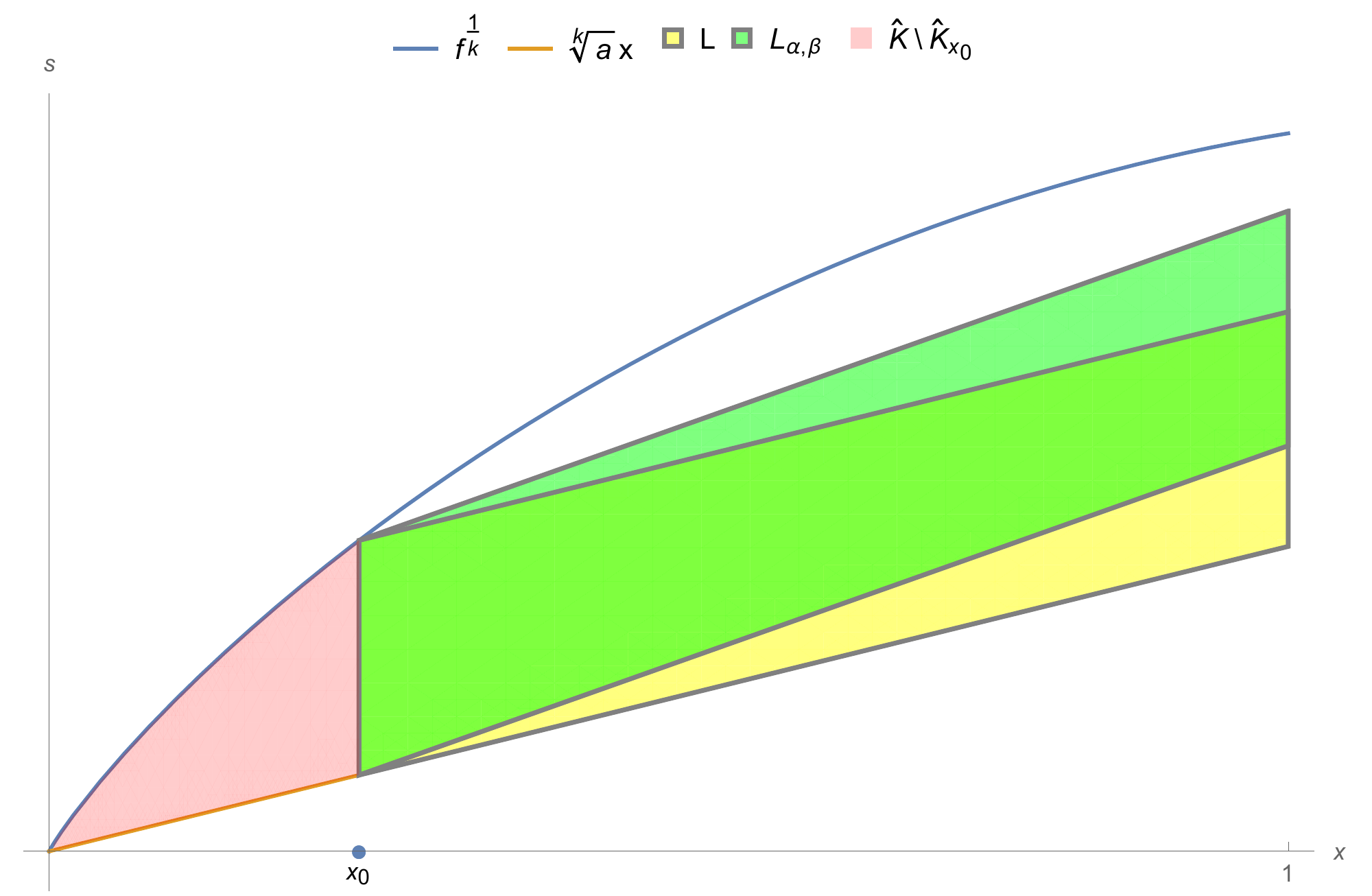}
  \caption{$L$, $L_{\alpha, \beta}$ and $\hat{K} \setminus \hat{K}_{x_0}$}\label{fig1}
\end{figure}

Since $f(1)>a$, we can select $\alpha, \beta$ so that $\sqrt[k]{a} < \alpha < \beta < (f(1))^{1/k}$. Now define $l_{\alpha, \beta}(x)=\alpha x + \beta - \alpha$. Since $(f(x))^{1/k}$ is a concave function  on the interval $(0,1)$, the equation $l_{\alpha, \beta}(x)=(f(x))^{1/k}$ has a unique solution in $(0,1)$. Let $x_0$ be the solution. Define
\begin{align*}
 &\hat{K}_{x_0}=\{(x,s) \in \mathbb{R}^2 : x_0 \leq x \leq 1, \, \sqrt[k]{a}x \leq s \leq (f(x))^{1/k}\} \\
 &L=\{(x,s) \in \mathbb{R}^2 : x_0 \leq x \leq 1, \, \sqrt[k]{a}x \leq s \leq \sqrt[k]{a}x + (f(x_0))^{1/k} - \sqrt[k]{a} x_0\}, \\
 &L_{\alpha, \beta}= \{(x,s) \in \mathbb{R}^2 : x_0 \leq x \leq 1, \, l_{\alpha, \beta}(x) -(f(x_0))^{1/k} + \sqrt[k]{a} x_0 \leq s \leq  l_{\alpha, \beta}(x)\}.
\end{align*}
An illustration of $L$, $L_{\alpha, \beta}$ and $\hat{K} \setminus \hat{K}_{x_0}$ is shown in Figure 1. It is clear that $L, L_{\alpha, \beta} \subset \hat{K}_{x_0} \subset \hat{K}$. Now define the function
\begin{align*}
  g(x)=\left\{
         \begin{array}{ll}
           (f(x))^{1/k}  \quad &x \in [0,x_0] \\
           \sqrt[k]{a}x + (f(x_0))^{1/k} - \sqrt[k]{a}x_0 \quad & x \in (x_0,1]
         \end{array}
       \right. ,
\end{align*}
\begin{align*}
   h(x)=\left\{
         \begin{array}{ll}
           (f(x))^{1/k}  \quad &x \in [0,x_0] \\
            l_{\alpha, \beta}(x)   \quad & x \in (x_0,1]
         \end{array}
       \right.
\end{align*}
and
\begin{align*}
j(x)= \left\{
         \begin{array}{ll}
              \sqrt[k]{a}x \quad &x \in [0,x_0] \\
            l_{\alpha, \beta}(x)  -(f(x_0))^{1/k} + \sqrt[k]{a} x_0 \quad & x \in (x_0,1]
         \end{array} .
       \right.
\end{align*}
Let $V_1=(\hat{K} \setminus \hat{K}_{x_0})  \cup L $ and $V_2=(\hat{K} \setminus \hat{K}_{x_0}) \cup L_{\alpha, \beta}  $. With this notation, we have
 \begin{align*}
   &\int_{V_1}  \left| \frac{\partial P}{ \partial x} (x,s^k) \right|^p ks^{k-1} \, dx ds= \int_{0}^{1} \int_{\sqrt[k]{a}x}^{g(x)}  \left| \frac{\partial P}{ \partial x} (x,s^k) \right|^p ks^{k-1} \, ds dx, \\
&\int_{V_2}  \left| \frac{\partial P}{ \partial x} (x,s^k) \right|^p ks^{k-1} \, dx ds= \int_{0}^{1} \int_{j(x)}^{h(x)}  \left| \frac{\partial P}{ \partial x} (x,s^k) \right|^p ks^{k-1} \, ds dx .
 \end{align*}
Making the change of variables $s=t+\sqrt[k]{a}x$ in the first and $s=t+\alpha x$ in the second integral and changing the orders of integration, we obtain
\begin{align*}
   &\int_{V_1}  \left| \frac{\partial P}{ \partial x} (x,s^k) \right|^p ks^{k-1} \, dx ds= \int_{0}^{ y_0} \int_{\tilde{g}(t)}^{1}  \left| \frac{\partial P}{ \partial x} (x,(t+\sqrt[k]{a}x)^k) \right|^p k(t+\sqrt[k]{a}x)^{k-1} \, dx dt, \\
&\int_{V_2}  \left| \frac{\partial P}{ \partial x} (x,s^k) \right|^p ks^{k-1} \, dx ds= \int_{\beta - \alpha -y_0}^{\beta - \alpha} \int_{\tilde{j}(t)}^{1}  \left| \frac{\partial P}{ \partial x} (x,(t+\alpha x)^k) \right|^p k(t+\alpha x)^{k-1} \, dx dt.
 \end{align*}
Here $y_0=(f(x_0))^{1/k}- \sqrt[k]{a}x_0$, $\tilde{g}$ is the inverse of the restriction of $(f(x))^{1/k}-\sqrt[k]{a}x$ to $[0,x_0]$
and
\begin{align*}
\tilde{j}(t)= \left\{
         \begin{array}{ll}
              \frac{t}{\sqrt[k]{a} - \alpha} \quad &t \in [\beta - \alpha -y_0,0] \\
            \tilde{h}(t) \quad & t \in (0,\beta - \alpha]
         \end{array} ,
       \right.
\end{align*}
where $\tilde{h}$ is the inverse of the restriction of $(f(x))^{1/k}-\alpha x$ to $(0,x_0]$. By Theorem 10.6 of \cite{Er}, there exists a positive constant $A_2$ (depending only on $x_0$ and $p$) such that
\begin{align*}
&  \int_{\tilde{g}(t)}^{1}  \left| Q'(x) \right|^p (t+\sqrt[k]{a}x)^{k-1} \, dx \leq A_2 (\deg Q +k)^{2p} \int_{\tilde{g}(t)}^{1}  \left| Q(x) \right|^p (t+\sqrt[k]{a}x)^{k-1} \, dx , \\
&  \int_{\tilde{j}(\tau)}^{1}  \left| R'(x) \right|^p (\tau+\alpha x)^{k-1} \, dx \leq A_2 (\deg R + k)^{2p}  \int_{\tilde{j}(\tau)}^{1}  \left| R(x) \right|^p (\tau+\alpha x)^{k-1} \, dx ,
 \end{align*}
for every $Q, R \in \mathcal{P}(\mathbb{R})$, for every $t \in [0,y_0]$ and for every $\tau \in [\beta - \alpha -y_0, \beta - \alpha]$. Hence
 \begin{align*}
   \int_{V_1}  \left| \frac{\partial P}{ \partial x} (x,s^k) + \sqrt[k]{a} ks^{k-1}\frac{\partial P}{ \partial y} (x,s^k) \right|^p & ks^{k-1} \, dx ds \\  &\leq A_2 (k \deg P +k)^{2p} \int_{V_1}  \left| P(x,s^k) \right|^p ks^{k-1} \, dx ds , \\
\int_{V_2}  \left| \frac{\partial P}{ \partial x} (x,s^k) + \alpha ks^{k-1}\frac{\partial P}{ \partial y} (x,s^k) \right|^p & ks^{k-1} \, dx ds \\ &\leq A_2 (k \deg P +k)^{2p} \int_{V_2}  \left| P(x,s^k) \right|^p ks^{k-1} \, dx ds.
 \end{align*}
Since $\hat{K} \setminus \hat{K}_{x_0} \subset V_1 \subset \hat{K}$ and $\hat{K} \setminus \hat{K}_{x_0} \subset V_2 \subset \hat{K}$, the above inequalities yield
 \begin{align*}
   \int_{\hat{K} \setminus \hat{K}_{x_0}}  \left| \frac{\partial P}{ \partial x} (x,s^k) \right|^p & ks^{k-1} \, dx ds \leq \frac {2^{p-1} A_2}{(\alpha - \sqrt[k]{a})^p} (k \deg P +k)^{2p} \int_{\hat{K}}  \left| P(x,s^k) \right|^p ks^{k-1} \, dx ds.
 \end{align*}
Therefore,
\begin{align*}
  \left\| \frac{\partial P}{ \partial x} \right\|^p_{L_p(K\setminus K_{x_0})} \leq \frac {2^{p-1} A_2}{(\alpha - \sqrt[k]{a})^p} (k \deg P +k)^{2p}  \left\| P \right\|^p_{L_p(K)}.
\end{align*}
Thus there is a positive constant $A_1$ such that
\begin{align*}
  \left\| \frac{\partial P}{ \partial x} \right\|_{L_p(K \setminus K_{x_0})} \leq A_1 (\deg P)^2 \left\| P \right\|_{L_p(K)},
\end{align*}
for each polynomial $P \in \mathcal{P} (\mathbb{R}^2)$.
\end{proof}
\newline \indent
The second lemma shows Remez-type inequality on $K$.
\begin{lem}\label{lem2}
Let $f,a,k,$ and $K$ be as in Lemma \ref{lem1}. Then, for every $1 \leq p < \infty$, there exists a positive constant $B_2$ such that, for every $n \in \mathbb{N}$, $n \geq 2$ and each polynomial $P \in \mathcal{P}_n (\mathbb{R}^2)$,
\begin{align}\label{Ineqlem2}
  \left\| P \right\|_{L_p(K)}  \leq B_2  {\|P\|}_{L_p(K_{1/n^2})},
\end{align}
where $K_{1/n^2}:=\{(x,y) \in \mathbb{R}^2 : \frac{1}{n^2} \leq x \leq 1, \, ax^k \leq y \leq f(x)\}$.
\end{lem}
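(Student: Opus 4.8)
The plan is to follow the scheme of the proof of Lemma~\ref{lem1}: straighten the lower boundary of $K$ by the substitution $y=s^{k}$ and reduce the two-dimensional Remez inequality to a one-parameter family of one-dimensional weighted Remez inequalities. After this substitution the claim reads
\[
 \int_{\hat K}\bigl|P(x,s^{k})\bigr|^{p}\,ks^{k-1}\,dx\,ds\ \le\ B_2^{\,p}\int_{\hat K_{1/n^{2}}}\bigl|P(x,s^{k})\bigr|^{p}\,ks^{k-1}\,dx\,ds,
\]
where $\hat K=\{0\le x\le 1,\ \sqrt[k]{a}\,x\le s\le(f(x))^{1/k}\}$ and $\hat K_{1/n^{2}}=\hat K\cap\{x\ge 1/n^{2}\}$ (the image of $K_{1/n^{2}}$). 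Since $\hat K$ is the disjoint union of $\hat K_{1/n^{2}}$ and the ``cuspidal sliver'' $S:=\hat K\cap\{x<1/n^{2}\}$, it is enough to dominate the integral over $S$ by a fixed multiple of the integral over $\hat K_{1/n^{2}}$.

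The difficulty is that $S$ touches $\hat K_{1/n^{2}}$ only along $x=1/n^{2}$, and one cannot connect points of $S$ to the bulk of $\hat K$ by polynomial curves of bounded degree through the vertex $(0,0)$. The remedy is to shear. Put $t=s-\sqrt[k]{a}\,x$ and $g(x)=(f(x))^{1/k}-\sqrt[k]{a}\,x$. Then $g$ is concave (a concave function minus a linear one), $g(0)=0$, and $g(1)=(f(1))^{1/k}-\sqrt[k]{a}>0$; hence $g$ is strictly increasing on a fixed interval $[0,\rho]$ with $\rho\in(0,1)$, and for $n$ with $1/n^{2}\le\rho$ it maps $[0,1/n^{2}]$ homeomorphically onto $[0,g(1/n^{2})]$ with inverse $g^{-1}$. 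In the $(x,t)$-plane set
\[
 W=\bigl\{(x,t):\ 0\le t\le g(1/n^{2}),\ \ g^{-1}(t)\le x\le 1\bigr\}.
\]
Two facts about $W$ must be checked, both immediate from the concavity of $g$: (i) $S\subseteq W$, because if $\sqrt[k]{a}x\le s\le(f(x))^{1/k}$ and $x<1/n^{2}$ then $t=s-\sqrt[k]{a}x\in[0,g(x)]\subseteq[0,g(1/n^{2})]$ and, $g$ being increasing, $g^{-1}(t)\le x$; (ii) $W\subseteq\hat K$, since $\sqrt[k]{a}x\le t+\sqrt[k]{a}x$ trivially, and $t+\sqrt[k]{a}x\le(f(x))^{1/k}$ on $W$ reduces, for $x\ge1/n^{2}$, to $g(x)\ge g(1/n^{2})$ on $[1/n^{2},1]$, which holds once $n$ is large because $g$ is concave (so its minimum over $[1/n^{2},1]$ sits at an endpoint) and $g(1/n^{2})\to g(0)=0<g(1)$. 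Thus $W$ is an admissible subset of $\hat K$ whose every section $\{x:(x,t)\in W\}=[g^{-1}(t),1]$ reaches all the way to $x=1$.

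It now suffices to prove $\|P\|_{L_p(W)}\le C\|P\|_{L_p(W\cap\{x\ge1/n^{2}\})}$, as $W\cap\{x\ge1/n^{2}\}\subseteq\hat K_{1/n^{2}}$. The shear has Jacobian $1$, so
\[
 \int_{W}\bigl|P(x,s^{k})\bigr|^{p}ks^{k-1}\,dx\,ds=\int_{0}^{g(1/n^{2})}\!\int_{g^{-1}(t)}^{1}\bigl|Q_{t}(x)\bigr|^{p}\,w_{t}(x)\,dx\,dt,\qquad Q_{t}(x):=P\!\bigl(x,(t+\sqrt[k]{a}x)^{k}\bigr),
\]
where, for each fixed $t$, $Q_{t}$ is a polynomial in $x$ of degree $\le kn$ and $w_{t}(x)=k(t+\sqrt[k]{a}x)^{k-1}$ is a positive increasing weight. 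By Fubini it is enough to show $\int_{g^{-1}(t)}^{1}|Q_{t}|^{p}w_{t}\le C^{p}\int_{1/n^{2}}^{1}|Q_{t}|^{p}w_{t}$ for each $t$. Apply the one-dimensional $L_{p}$ Remez inequality on $[g^{-1}(t),1]$ with the subinterval $[g^{-1}(t),1/n^{2})$ deleted: its relative length is $\le(1/n^{2})/(1-1/n^{2})$, so for a polynomial of degree $\le kn$ the deletion costs only a factor $C_{1}=C_{1}(k,p)$ independent of $n$ and $t$, because $(kn)^{2}$ times that relative length stays bounded; this gives $\int_{g^{-1}(t)}^{1}|Q_{t}|^{p}\le C_{1}^{p}\int_{1/n^{2}}^{1}|Q_{t}|^{p}$. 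Finally the weight is removed by monotonicity: $w_{t}\le w_{t}(1/n^{2})$ on $[g^{-1}(t),1/n^{2})$ and $w_{t}\ge w_{t}(1/n^{2})$ on $[1/n^{2},1]$, so $\int_{g^{-1}(t)}^{1/n^{2}}|Q_{t}|^{p}w_{t}\le w_{t}(1/n^{2})\int_{g^{-1}(t)}^{1}|Q_{t}|^{p}\le w_{t}(1/n^{2})C_{1}^{p}\int_{1/n^{2}}^{1}|Q_{t}|^{p}\le C_{1}^{p}\int_{1/n^{2}}^{1}|Q_{t}|^{p}w_{t}$, and adding $\int_{1/n^{2}}^{1}|Q_{t}|^{p}w_{t}$ gives the claim with $C=(1+C_{1}^{p})^{1/p}$. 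Integrating in $t$ and using $\|P\|_{L_p(K)}^{p}=\|P\|_{L_p(K_{1/n^{2}})}^{p}+\|P\|_{L_p(K\setminus K_{1/n^{2}})}^{p}$ yields the lemma with $B_2=(2+C_{1}^{p})^{1/p}$ for all $n\ge N_{0}$; the finitely many smaller $n$ are absorbed into $B_2$ via the equivalence of $\|\cdot\|_{L_p(K)}$ and $\|\cdot\|_{L_p(K_{1/n^{2}})}$ on the finite-dimensional space $\mathcal P_{n}(\mathbb R^{2})$ (note $K_{1/n^{2}}$ has nonempty interior). The genuine obstacle is the geometric step — picking the shear so that the enlarged region $W$ remains inside $\hat K$ while its fibres become long; after that the one-dimensional estimate is routine.
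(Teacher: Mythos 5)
Your proof is correct and takes essentially the same route as the paper: substitute $y=s^k$, shear along the slope $\sqrt[k]{a}$, and reduce via Fubini to a one-dimensional $L_p$ Remez inequality on fibres that reach $x=1$, on which removing a set of measure $O(1/n^2)$ costs only a bounded factor for polynomials of degree $O(n)$. The only (minor) differences are bookkeeping: the paper works with a fixed auxiliary region whose upper boundary is linearized beyond $x=1/4$ and invokes Erd\'elyi's weighted Remez inequality (Theorem 4.5 of the cited survey) directly, whereas you cap the sheared region at height $g(1/n^2)$, treat the weight $k(t+\sqrt[k]{a}x)^{k-1}$ by monotonicity together with the unweighted Remez inequality, and absorb the finitely many small $n$ by finite-dimensionality.
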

\begin{proof}
Proceeding as in the proof of Lemma \ref{lem1}, we have
\begin{align*}
  \left\| P \right\|^p_{L_p(K)}= \int_{\hat{K}}  \left| P (x,s^k) \right|^p ks^{k-1} \, dx ds,
\end{align*}
where $\hat{K}=\{(x,s) \in \mathbb{R}^2 : 0 \leq x \leq 1, \, \sqrt[k]{a}x \leq s \leq (f(x))^{1/k}\}$. If we set
$$u(x)=\left\{
         \begin{array}{ll}
            (f(x))^{1/k}   \quad &x \in [0,1/4] \\
             \sqrt[k]{a}x + (f(1/4))^{1/k} + \frac{\sqrt[k]{a}}{4}  \quad & x \in (1/4,1]
         \end{array}
       \right.   $$
and
\begin{align*}
  \tilde{K}=\{(x,s) \in \mathbb{R}^2 : 0 \leq x \leq 1, \, \sqrt[k]{a}x \leq s \leq u(x)\} ,
\end{align*}
we can write
\begin{align*}
  \int_{\tilde{K}}  \left| P (x,s^k) \right|^p ks^{k-1} \, dx ds= \int_{0}^{1} \int_{\sqrt[k]{a}x}^{u(x)}  \left| P (x,s^k) \right|^p ks^{k-1} \, ds dx.
\end{align*}
Using the change of variables $s = t + \sqrt[k]{a}x $ and changing the order of integration, we have
\begin{align*}
  \int_{\tilde{K}}  \left| P (x,s^k) \right|^p ks^{k-1} \, dx ds= \int_{0}^{\eta} \int_{\tilde{u}(t)}^{1} \left| P (x,s^k) \right|^p k(t + \sqrt[k]{a}x )^{k-1} \, dx dt,
\end{align*}
where $\eta=(f(1/4))^{1/k} + \frac{\sqrt[k]{a}}{4}$ and $\tilde{u}$ is the inverse of the restriction of $(f(x))^{1/k}-\sqrt[k]{a}x$ to $[0,1/4]$. By Theorem 4.5 of \cite{Er}, there is an absolute constant $B_2>0$ such that
\begin{align*}
  \int_{\tilde{u}(t)}^{1} \left| P (x,s^k) \right|^p k(t + \sqrt[k]{a}x )^{k-1} \, dx \leq B^p_2 \int_{\tilde{u}_n(t)}^{1} \left| P (x,s^k) \right|^p k(t + \sqrt[k]{a}x )^{k-1} \, dx
\end{align*}
for every $n \in \mathbb{N}$, $n \geq 2$ and each polynomial $P \in \mathcal{P}_n (\mathbb{R}^2)$. Here $\tilde{u}_n$ is the function on $[0,\eta]$ defined by  $\tilde{u}_n(t)=\max\{1/n^2,\tilde{u}(t)\}$. Hence
\begin{align*}
  \int_{\tilde{K}}  \left| P (x,s^k) \right|^p ks^{k-1} \, dx ds \leq B^p_2  \int_{\tilde{K}_{1/n^2}}  \left| P (x,s^k) \right|^p ks^{k-1} \, dx ds,
\end{align*}
where $\tilde{K}_{1/n^2}=\{(x,s) \in \mathbb{R}^2 : 1/n^2 \leq x \leq 1, \, \sqrt[k]{a}x \leq s \leq u(x)\}$. Therefore
\begin{align*}
  \int_{\hat{K}}  \left| P (x,s^k) \right|^p ks^{k-1} \, dx ds \leq B^p_2  \int_{\hat{K}_{1/n^2}}  \left| P (x,s^k) \right|^p ks^{k-1} \, dx ds,
\end{align*}
which completes the proof.
\end{proof}
\section{Main result}
\begin{thm}\label{mainthm}
Let $f: [0,1] \rightarrow [0,\infty)$ be a convex function so that $f'(0)=f(0)=0$ and $f'(1)<\infty$. Suppose that there exist constants $a$ and $k$ such that $a>0$, $k \in \mathbb{N}$, $k \geq 2$, $f(1)>a$, and $(f)^{1/k}$ is a concave function on the interval $(0,1)$. Let $K=\{(x,y) \in \mathbb{R}^2 : 0 \leq x \leq 1, \, ax^k \leq y \leq f(x)\}$. Then, for every $1 \leq p < \infty$,
 \begin{align}\label{main}
    \mu_{p}(K) = \inf \{ \tau >0 : \, \exists_{C>0} \, \forall_{n \in \mathbb{N}} \, \, n^2 \leq  Cf'(1/n^2) n^\tau \}.
  \end{align}
\end{thm}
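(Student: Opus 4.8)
Denote by $\tau_{0}$ the right-hand side of (\ref{main}); thus $\tau>\tau_{0}$ exactly when there is $C_{0}>0$ with $1/f'(1/n^{2})\le C_{0}n^{\tau-2}$ for all $n\in\mathbb{N}$. We prove separately the two inequalities $\mu_{p}(K)\le\tau_{0}$ and $\mu_{p}(K)\ge\tau_{0}$. For the upper bound it suffices, by iteration, to bound $\|D^{\alpha}P\|_{L_{p}(K)}$ for $|\alpha|=1$ and $P\in\mathcal{P}_{n}(\mathbb{R}^{2})$; since $f'(1/n^{2})\to f'(0)=0$, the exponent $2$ is inadmissible, so $\tau_{0}\ge 2$ and the estimate for $\partial P/\partial x$ is already contained in Lemma \ref{lem1}. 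Hence everything comes down to proving, for each $\tau>\tau_{0}$, a Markov-type bound $\|\partial P/\partial y\|_{L_{p}(K)}\le Cn^{\tau}\|P\|_{L_{p}(K)}$.

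\textbf{Bound for $\partial P/\partial y$.} The idea is to push the argument of Lemma \ref{lem1} one step further. Since $\partial P/\partial y$ has degree $<n$, Lemma \ref{lem2} reduces the estimate to the part of $K$ with $x\ge 1/n^{2}$. Next, pass as in Lemma \ref{lem1} to the coordinates $(x,s)$ with $y=s^{k}$ and then to $(x,t)$ with $s=t+\sqrt[k]{a}\,x$ (and, near $x=1$, also with the shear of slope $\alpha$). With $D_{1}=\partial_{x}P(x,s^{k})+\sqrt[k]{a}\,ks^{k-1}\partial_{y}P(x,s^{k})$ and $D_{2}=\partial_{x}P(x,s^{k})+\alpha\,ks^{k-1}\partial_{y}P(x,s^{k})$ (the sheared derivatives already occurring in the proof of Lemma \ref{lem1}) one has
\[
\frac{\partial P}{\partial y}(x,s^{k})=\frac{D_{2}-D_{1}}{(\alpha-\sqrt[k]{a})\,ks^{k-1}},
\]
so it suffices to bound $\int|D_{i}|^{p}(ks^{k-1})^{1-p}$, which in the $(x,t)$ picture is, for each fixed $t$, a one-dimensional $L_{p}$ Markov inequality on an interval of length comparable to $1$ (namely $[\tilde g(t),1]$) carrying the Jacobi-type weight $(t+\sqrt[k]{a}\,x)^{k-1}$ on the polynomial side but $(t+\sqrt[k]{a}\,x)^{-(k-1)(p-1)}$ on the derivative side — the weighted analogue of the one-dimensional statement quoted from \cite{Er} in Lemma \ref{lem1}. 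The ratio of these weights, $(t+\sqrt[k]{a}\,x)^{-(k-1)p}$, is singular as $x\to 0$, but after the reduction to $x\ge 1/n^{2}$ the left endpoint of the interval is always $\ge 1/n^{2}$, and one closes the argument using elementary consequences of the hypotheses on $f$: concavity of $f^{1/k}$ gives $f(x)\ge f(1)x^{k}$, hence $L(x):=f(x)-ax^{k}\ge\frac{f(1)-a}{f(1)}f(x)$; convexity of $f$ together with concavity of $f^{1/k}$ gives $xf'(x)\le kf(x)$; and $x\mapsto xf'(x)$ is nondecreasing (since $f'\ge0$ is nondecreasing). Consequently $L(1/n^{2})$ is comparable to $f'(1/n^{2})/n^{2}$, the quantity governing $\tau_{0}$. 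Carrying the constants through, summing over $t$, and inserting $1/f'(1/n^{2})\le C_{0}n^{\tau-2}$ yields $\|\partial P/\partial y\|_{L_{p}(K)}\le Cn^{\tau}\|P\|_{L_{p}(K)}$, so $\mu_{p}(K)\le\tau_{0}$.

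\textbf{Lower bound.} Fix $\tau<\tau_{0}$, so that $n_{j}^{2-\tau}/f'(1/n_{j}^{2})\to\infty$ along a sequence $n_{j}\to\infty$. One produces $P_{n_{j}}\in\mathcal{P}_{n_{j}}(\mathbb{R}^{2})$ with $\|\partial P_{n_{j}}/\partial y\|_{L_{p}(K)}>n_{j}^{\tau}\|P_{n_{j}}\|_{L_{p}(K)}$ by transplanting a Chebyshev polynomial onto the thin section $\{x=1/n^{2}\}\times[a\,n^{-2k},f(1/n^{2})]$ (whose length is again comparable to $f'(1/n^{2})/n^{2}$) along the admissible polynomial arc $t\mapsto(t,ct^{k})$, $a<c<f(1)$, that joins the cusp $(0,0)$ to the interior of $K$, and correcting by an auxiliary polynomial factor so that the $L_{p}$ mass of both $P_{n_{j}}$ and $\partial P_{n_{j}}/\partial y$ concentrates near that section. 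Using once more that $L(1/n^{2})$ is comparable to $f'(1/n^{2})/n^{2}$, this turns the obstruction into the requirement $n^{2}\le Cf'(1/n^{2})n^{\tau}$, which fails for $\tau<\tau_{0}$ by the choice of the $n_{j}$; hence $\mu_{p}(K)\ge\tau_{0}$, completing the proof.

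\textbf{Main obstacle.} The genuinely delicate step is the upper bound for $\partial P/\partial y$. A crude section-by-section Markov inequality in $y$ overshoots, producing roughly $n^{2}$ times $n^{\tau_{0}}$ instead of $n^{\tau_{0}}$, because the worst $y$-sections near the cusp have length only comparable to $f'(1/n^{2})/n^{2}$. One must instead work in the sheared coordinates on intervals of length comparable to $1$ and balance the blow-up of the Jacobi weight against the localization scale $1/n^{2}$ furnished by Lemma \ref{lem2}; proving the correct one-dimensional weighted $L_{p}$ Markov inequality with the right degree dependence, and keeping the resulting exponent equal to $\tau_{0}$ for all admissible $f$ (and not merely $\le 2k$), is the central technical point. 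A secondary difficulty is engineering the lower-bound test polynomials so that their $L_{p}$ norms really do localize at the cusp.
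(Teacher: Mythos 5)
There is a genuine gap in the upper bound, and it sits exactly where you yourself locate ``the central technical point.'' Your device $\frac{\partial P}{\partial y}(x,s^k)=\frac{D_2-D_1}{(\alpha-\sqrt[k]{a})\,ks^{k-1}}$ forces you to divide by $ks^{k-1}$, and on $\hat K_{1/n^2}$ one only has $s\geq \sqrt[k]{a}/n^2$, so the weight ratio you must absorb is of size $n^{2(k-1)p}$; after the $p$-th root this yields the exponent $2+2(k-1)=2k$, which coincides with $\tau_0$ only in the extremal case $f'(1/n^2)\asymp n^{-2(k-1)}$. You assert that the comparability $f(1/n^2)-an^{-2k}\asymp f'(1/n^2)/n^2$ closes the argument, but that quantity never actually enters your sheared-coordinate computation: nothing in the proposed weighted one-dimensional Markov inequality (for which, moreover, no reference or proof is offered --- Theorem 10.6 of \cite{Er} does not cover a weight of the form $(t+\sqrt[k]{a}x)^{-(k-1)(p-1)}$ on the derivative side) produces the factor $n^2/f'(1/n^2)$ rather than $n^{2k}$. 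The paper's resolution is a different change of variables: one sets $y=s^k+(1-c)(\eta_n'x+\eta_n)$ with $\eta_n'=f'(1/n^2)$, i.e.\ one subtracts a multiple of the tangent line to $f$ at $x=1/n^2$ \emph{before} extracting the $k$-th root. The two sheared directional derivatives then carry the coefficient $(ks^{k-1}+(1-c)\eta_n')\frac{\partial P}{\partial y}$, whose additive constant $(1-c)f'(1/n^2)$ is bounded below uniformly in $s$; dividing by it is what yields the sharp factor $n^2/f'(1/n^2)$ (inequality (\ref{part1})). A separate argument, absent from your plan, is still needed for the thin strip $ax^k\leq y\leq ax^k+(1-c)(\eta_n'x+\eta_n)$ near the cusp, where the paper straightens the strip by $y=s+ax^k$, fits a unit-width parallelogram, and applies the univariate $L_p$ Markov inequality in the sheared direction together with Lemma \ref{lem1}.

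The lower bound is likewise only a sketch: ``one produces $P_{n_j}$ \dots\ by transplanting a Chebyshev polynomial \dots\ and correcting by an auxiliary polynomial factor so that the $L_p$ mass concentrates'' is precisely the statement that needs proof, since constructing a degree-$n$ factor that localizes $L_p$ mass at scale $1/n^2$ near $x=0$ without destroying the ratio of norms is the whole difficulty. The paper does this explicitly with $U_n(x,y)=yP_n^{(\omega,\sigma)}(1-x)$, using the estimate (7.32.5) of \cite{Szego} away from $x=0$ and the Mehler--Heine asymptotics near $x=0$ to show $\|\partial U_n/\partial y\|_{L_p(K)}\geq \Theta\,\frac{n^2}{f'(1/n^2)}\|U_n\|_{L_p(K_{1/n^2})}$, after which Lemma \ref{lem2} converts $K_{1/n^2}$ into $K$. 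Your heuristic (the section at $x=1/n^2$ has thickness $\asymp f'(1/n^2)/n^2$) correctly identifies why the answer is $\tau_0$, but as written neither half of the argument is complete.
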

\begin{proof}
First we show that $\mu_{p}(K) \leq \inf \{ \tau >0 : \, \exists_{C>0} \, \forall_{n \in \mathbb{N}} \, \, n^2 \leq  Cf'(1/n^2) n^\tau \}$. By Lemma \ref{lem1} and Lemma \ref{lem2}, it suffices to prove that there exists a positive constant $C$ such that
\begin{align}\label{res}
  \left\| \frac{\partial P}{ \partial y} \right\|_{L_p(K_{1/n^2})}  \leq C \frac{n^2}{f'(1/n^2)} \|P\|_{L_p(K)},
\end{align}
for every polynomial $P \in \mathcal{P}_n (\mathbb{R}^2)$. Let $\eta_n'=f'(1/n^2)$ and $\eta_n=f(1/n^2)-\frac{f'(1/n^2)}{n^2}$. Since $f(1)>a$, we can find a constant $c$ such that $f(1)>c^{k+1}f(1)>a$. Let
\begin{align*}
  K_{1/n^2}^c=\{ (x,y) \in \mathbb{R}^2 : 1/n^2 \leq x \leq 1, \, ax^k + (1-c)(\eta_n' x + \eta_n) \leq y \leq f(x)  \}.
\end{align*}
Since $(f)^{1/k}$ is a concave function, the set $K_{1/n^2}^c$ is nonempty. Using the change of variables $y=s^k+(1-c)(\eta_n' x + \eta_n)$, we have
\begin{align}\label{K}
  \left\| P\right\|^p_{L_p(K_{1/n^2}^c)}= \int_{\hat{K}_{1/n^2}^c}  \left| P(x,s^k+(1-c)(\eta_n' x + \eta_n)) \right|^p ks^{k-1} \, dx ds,
\end{align}
where $\hat{K}_{1/n^2}^c=\{ (x,s) \in \mathbb{R}^2 : 1/n^2 \leq x \leq 1, \, \sqrt[k]{a}x \leq s \leq [f(x)-(1-c)(\eta_n' x + \eta_n)]^{1/k}\}$. It now follows from the convexity of $f$ that
 \begin{align*}
    & \hat{K}_{1/n^2}^c \subset \hat{K}_{1/n^2}=\{ (x,s) \in \mathbb{R}^2 : 1/n^2 \leq x \leq 1, \, \sqrt[k]{a}x \leq s \leq (f(x))^{1/k}\},  \\
   & \{ (x,s) \in \mathbb{R}^2 : 1/n^2 \leq x \leq 1, \, \sqrt[k]{a}x \leq s \leq (cf(x))^{1/k}\} \subset \hat{K}_{1/n^2}^c.
 \end{align*}
Select $\alpha, \beta$ so that $c(cf(1))^{1/k}> \beta > \alpha > \sqrt[k]{a}$. Let $l_{\sqrt[k]{a}, \alpha}(x)=\sqrt[k]{a}x + \alpha - \sqrt[k]{a}$. Then the equation $l_{\sqrt[k]{a}, \alpha}(x)=(cf(x))^{1/k}$ has a unique solution in $[0,1]$, and we denote it by $x_1$. Let $\lambda=\frac{\beta - (cf(x_1))^{1/k}}{1-x_1}$, and define $l_{\lambda,\beta}(x)=\lambda x + \beta - \lambda$. Now we wish to prove that the equation $l_{\lambda,\beta}(x)=(f(x))^{1/k}$ has a unique solution. Since $1>c>0$,
\begin{align}\label{1}
(f(1))^{1/k}>c(cf(1))^{1/k}>\beta=l_{\lambda,\beta}(1).
\end{align}
By the concavity of $(cf(x))^{1/k}$, and by the fact that $f(0)=0$, we have
\begin{align*}
  (cf(x))^{1/k} \geq (cf(1))^{1/k}x \quad \text{for every} \quad x \in [0,1].
\end{align*}
Thus by $(cf(1))^{1/k} > \beta$,
\begin{align*}
    (cf(1))^{1/k}x > \beta x \quad \text{for every} \quad x \in (0,1].
\end{align*}
Therefore
\begin{align}\label{0}
 l_{\lambda,\beta}(0)= \beta - \lambda =  \frac{(cf(x_1))^{1/k}-\beta x_1}{1-x_1} >0=(f(0))^{1/k}.
\end{align}
\begin{figure}
  \centering
  \includegraphics[scale=0.5]{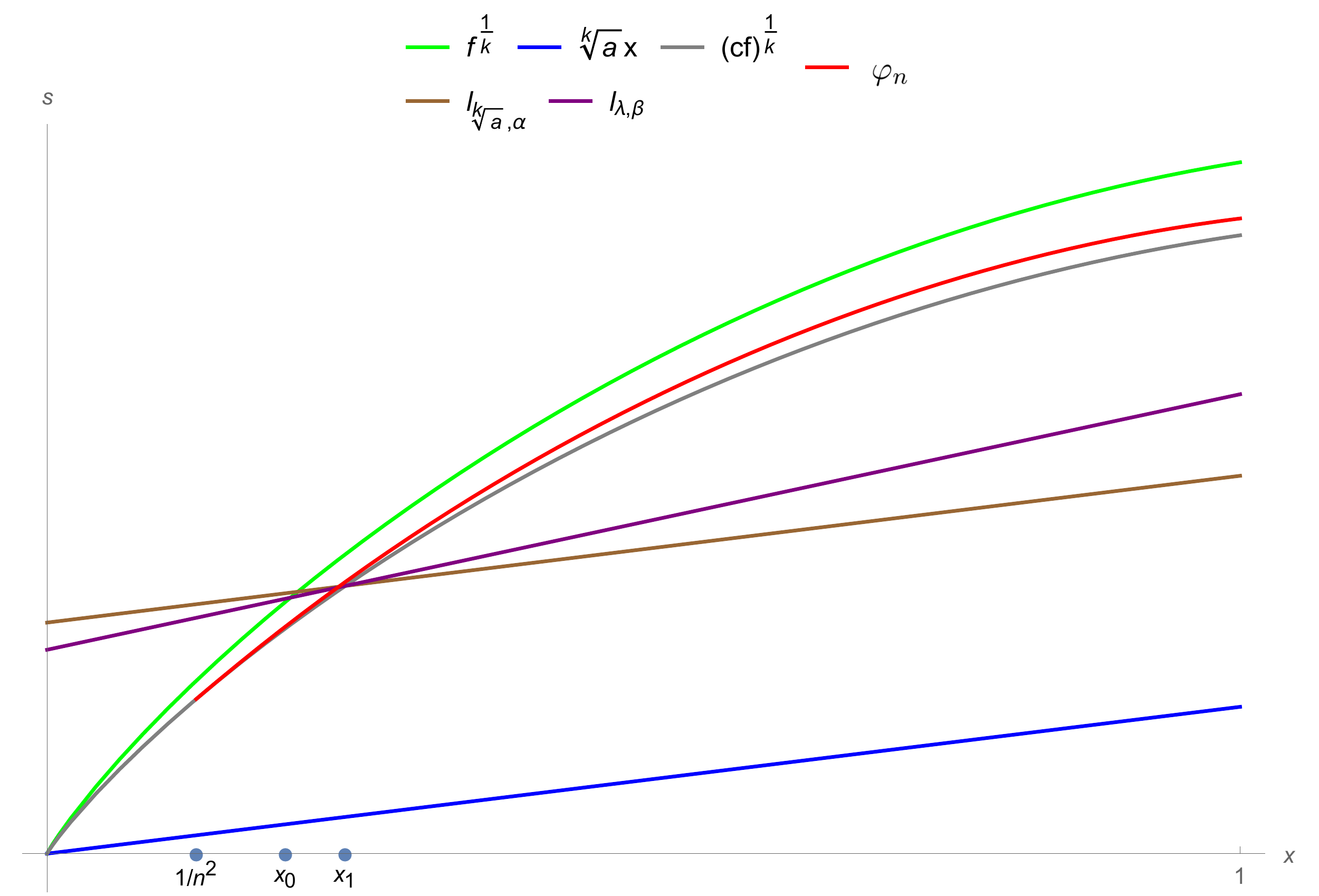}
  \caption{Graphs of $f^{1/k}$, $\sqrt[k]{a}x$, $(cf)^{1/k}$, $l_{\sqrt[k]{a},\beta}$, $l_{\lambda,\beta}$  and $\varphi_n$ }\label{fig2}
\end{figure}
By (\ref{1}) and (\ref{0}) there exists $x_0 \in (0,1)$ such that $l_{\lambda,\beta}(x_0)=(f(x_0))^{1/k}$. The Lemma 1.1.4 from \cite{NP} asserts the uniqueness of $x_0$. Let $\varphi_n(x):=[f(x)-(1-c)(\eta_n' x + \eta_n)]^{1/k}$ for every $1/n^2 < x_0$ and  $x \in [1/n^2,1]$. The functions $f^{1/k}$, $\sqrt[k]{a}x$, $(cf)^{1/k}$, $l_{\sqrt[k]{a},\beta}$, $l_{\lambda,\beta}$,  and $\varphi_n$ are illustrated in  Figure \ref{fig2}. In order to be able to implement the techniques used in the proof of Lemma \ref{lem1}, we must show that $\varphi_n(x) \geq   \lambda x + \varphi_n(1/n^2) - \lambda/n^2$ for $x \in [x_0,1]$ and the function $\varphi_n(x) - \lambda x$ is strictly increasing on $[1/n^2,x_0]$. It is clear that
\begin{align*}
  \varphi_n(x) \geq   \lambda x + \varphi_n(1/n^2) - \lambda/n^2 \quad \text{for} \quad x \in [x_1,1].
\end{align*}
Since $c(cf(1))^{1/k}> \beta$,
\begin{align}\label{lambda}
  \frac{c(cf(1))^{1/k} - c(cf(x_1))^{1/k}}{1-x_1} > \frac{c(cf(1))^{1/k} - (cf(x_1))^{1/k}}{1-x_1} > \lambda.
\end{align}
The concavity of $f^{1/k}$ yields
\begin{align}\label{lambda1}
   c^{1/k+1} (f^{1/k})'(x)  \geq \frac{c(cf(1))^{1/k} - c(cf(x_1))^{1/k}}{1-x_1} \quad \text{for every} \quad x \in (0,x_1].
\end{align}
The convexity of $f$ gives
\begin{align}\label{lambda2}
  \varphi_n'(x)  \geq  c^{1/k+1} (f^{1/k})'(x)  \quad \text{for every} \quad x \in (1/n^2,1].
\end{align}
Putting the inequalities $(\ref{lambda})-(\ref{lambda2})$ together, we have
\begin{align*}
  \varphi_n'(x) > \lambda \quad \text{for every} \quad x \in [1/n^2,x_1].
\end{align*}
Therefore $\varphi_n(x) - \lambda x$ is strictly increasing on $[1/n^2,x_0]$ and
\begin{align*}
  \varphi_n(x) \geq   \lambda x + \varphi_n(1/n^2) - \lambda/n^2 \quad \text{for} \quad x \in [1/n^2,x_1].
\end{align*}
Now, similarly as in the proof of Lemma \ref{lem1}, we can establish that there exists a positive constant $B$ (independent on $n$) such that
\begin{align*}
   \int_{\hat{K}_{1/n^2,x_0}^c}  & \left| \frac{\partial P}{ \partial x}  (x,z) + \sqrt[k]{a} (ks^{k-1} + (1-c)\eta_n') \frac{\partial P}{ \partial y} (x,z) \right|^p  ks^{k-1} \, dx ds \\  &\leq B (k \deg P +k)^{2p} \int_{\hat{K}_{1/n^2}^c}  \left| P(x,z) \right|^p ks^{k-1} \, dx ds , \\
\int_{\hat{K}_{1/n^2,x_0}^c}  & \left| \frac{\partial P}{ \partial x} (x,z) + \lambda (ks^{k-1} + (1-c)\eta_n') \frac{\partial P}{ \partial y} (x,z) \right|^p  ks^{k-1} \, dx ds \\  &\leq B (k \deg P +k)^{2p} \int_{\hat{K}_{1/n^2}^c}  \left| P(x,z) \right|^p ks^{k-1} \, dx ds,
 \end{align*}
where $z=s^k+(1-c)(\eta_n' x + \eta_n)$ and $$\hat{K}_{1/n^2,x_0}^c=\{ (x,s) \in \mathbb{R}^2 : 1/n^2 \leq x \leq x_0, \, \sqrt[k]{a}x \leq s \leq [f(x)-(1-c)(\eta_n' x + \eta_n)]^{1/k}\}.$$
\end{proof}
From these inequalities, we see that
\begin{align*}
   \int_{\hat{K}_{1/n^2,x_0}^c}  & \left| \frac{\partial P}{ \partial x}  (x,z) + (1-c)\eta_n' \frac{\partial P}{ \partial y} (x,z) \right|^p  ks^{k-1} \, dx ds \\  &\leq B \frac{2^{p-1} (k \deg P +k)^{2p}}{(\lambda - \sqrt[k]{a})^p} \int_{\hat{K}_{1/n^2}^c}  \left| P(x,z) \right|^p ks^{k-1} \, dx ds.
 \end{align*}
Hence by (\ref{K}), we have
\begin{align*}
   \int_{K_{1/n^2,x_0}^c}   \left| \frac{\partial P}{ \partial x}  (x,y) + (1-c)\eta_n' \frac{\partial P}{ \partial y} (x,y) \right|^p &  \, dx dy \\  \leq & B \frac{2^{p-1} (k \deg P +k)^{2p}}{(\lambda - \sqrt[k]{a})^p} \int_{K_{1/n^2}^c}  \left| P(x,y) \right|^p  \, dx dy.
 \end{align*}
Thus, by Lemma \ref{lem1}, and by the fact that $K_{1/n^2,x_0}^c \subset K$, we have
 \begin{align}\label{part1}
   \int_{K_{1/n^2,x_0}^c}   \left| (1-c)\eta_n' \frac{\partial P}{ \partial y} (x,y) \right|^p &  \, dx dy \nonumber \\  \leq & (B +B_1) \frac{4^{p-1} (k \deg P +k)^{2p}}{(\lambda - \sqrt[k]{a})^p} \int_{K}  \left| P(x,y) \right|^p  \, dx dy.
 \end{align}
\indent Now we wish to prove a similar result to (\ref{part1}) for the following domain
\begin{align*}
   K_{0,\delta}^a=\{ (x,y) \in \mathbb{R}^2 : 0 \leq x \leq \delta, \, ax^k \leq y \leq ax^k + (1-c)(\eta_n' x + \eta_n) \}.
\end{align*}
where $\delta$ is a fixed number in $(0,1)$ which does not depend on $n$. Using the change of variables $y =s + ax^k$, we have
\begin{align*}
\int_{K_{0,\delta}^a}   \left|  \frac{\partial P}{ \partial y} (x,y) \right|^p  \, dx dy = \int_{T_{n,\delta}}  \left|  \frac{\partial P}{ \partial y} (x,s + ax^k) \right|^p  \, dx ds,
\end{align*}
where $T_{n,\delta}=\{ (x,s) \in \mathbb{R}^2 : 0 \leq x \leq \delta, \, 0 \leq s \leq (1-c)(\eta_n' x + \eta_n) \}.$ It is clear that
\begin{align}\label{Para}
  T_{n,1/2} \subset \mathrm{L}:=\{(x,s) \in \mathbb{R}^2 : 0 \leq s \leq \sigma, \, \psi_n(s) \leq x \leq  \psi_n(s) + 1/2 \} \subset T_{n,1},
\end{align}
where $\psi_n(s)=\frac{s}{(1-c)\eta_n'} - \frac{\eta_n}{\eta_n'}$ and $\sigma=(1-c)(\frac{\eta_n'}{2} + \eta_n)$. A change of variables gives
\begin{align*}
 \int_{\mathrm{L}}  \left|  \frac{\partial P}{ \partial y} (x,s + ax^k) \right|^p  \, dx ds= \int_{0}^{\sigma} \int_{0}^{ \frac{1}{2} } \left|  \frac{\partial P}{ \partial y} (t + \psi_n(s), s + a(t + \psi_n(s))^k) \right|^p \, dt ds.
\end{align*}
Applying the univariate $L^p$ Markov inequality twice we obtain, with an absolute constant $B_3$,
\begin{align*}
  \int_{\mathrm{L}}  \left|  \frac{\partial P}{ \partial y} (x,s + ax^k) + (\psi_n'(s)-1) \frac{\partial P}{ \partial x} (x,s + ax^k) \right|^p & \, dx ds \\  \leq (B_3(1+1/\sigma))^p (k \deg P)^{2p} \int_{\mathrm{L}} & \left|  P (x,s + ax^k) \right|^p  \, dx ds.
\end{align*}
Thus, by (\ref{Para}), and by the fact that $K_{0,1}^a \subset K$, we have
\begin{align}\label{1/2y}
  \int_{K_{0,1/2}^a}   \left|  \frac{\partial P}{ \partial y} (x,y) +  (\psi_n'(s)-1) \frac{\partial P}{ \partial x} (x,y)\right|^p & \, dx dy \nonumber \\ \leq (B_3(1+1/\sigma))^p & (k \deg P)^{2p}  \int_{K}    \left|  P (x,y) \right|^p  \, dx dy
\end{align}
By Lemma \ref{lem1},
 \begin{align}\label{1/2x}
  \int_{K_{0,1/2}^a}   \left| (\psi_n'(s)-1) \frac{\partial P}{ \partial x} (x,y)\right|^p  \, dx dy  \leq (B_1|\psi_n'(s)-1|)^p  (\deg P)^{2p}  \int_{K}    \left|  P (x,y) \right|^p  \, dx dy.
\end{align}
By (\ref{1/2y}) and (\ref{1/2x}), there exists a positive constant $B_4$ such that
\begin{align}\label{dy}
   \int_{K_{0,1/2}^a}   \left|  \frac{\partial P}{ \partial y} (x,y) \right|^p  \, dx dy \leq \left(\frac{B_4}{\eta_n'} \right)^p (\deg P)^{2p}  \int_{K}    \left|  P (x,y) \right|^p  \, dx dy.
\end{align}
Now by (\ref{part1}) and (\ref{dy}), there exists a positive constant $B_5$ such that
 \begin{align}\label{dy2}
   \int_{K_{1/n^2,\theta}}   \left|  \frac{\partial P}{ \partial y} (x,y) \right|^p  \, dx dy \leq \left(\frac{B_5}{\eta_n'} \right)^p (\deg P)^{2p}  \int_{K}    \left|  P (x,y) \right|^p  \, dx dy,
\end{align}
where $\theta:=\min\{x_0, 1/2\}$ and
$K_{1/n^2,\theta}=\{(x,y) \in \mathbb{R}^2 : 1/n^2 \leq x \leq \theta, \, ax^k \leq y \leq f(x)\}.$
Observe that $K_{\theta,1}=\{(x,y) \in \mathbb{R}^2 : \theta \leq x \leq 1, \, ax^k \leq y \leq f(x)\}$ is locally Lipschitzian compact subsets of $\mathbb{R}^2$, then there exists a positive constant $B_6$, independent of $n$,  such that
\begin{align}\label{dy3}
   \int_{K_{\theta,1}}   \left|  \frac{\partial P}{ \partial y} (x,y) \right|^p  \, dx dy \leq B^p_6  (\deg P)^{2p}  \int_{K}    \left|  P (x,y) \right|^p  \, dx dy.
\end{align}
Thus, from inequalities (\ref{dy2}) and (\ref{dy3}), we conclude (\ref{res}). \\
\indent To prove that
\begin{align*}
  \mu_{p}(K) \geq \inf \{ \tau >0 : \, \exists_{C>0} \, \forall_{n \in \mathbb{N}} \, \, n^2 \leq  Cf'(1/n^2) n^\tau \},
\end{align*}
let $U_n(x,y)=yP_n^{(\omega,\sigma)}(1-x)$. Here $P_n^{(\omega,\sigma)}$ denotes the Jacobi polynomial of degree $n$ associated with parameters $\omega$, $\sigma$. Then
\begin{align*}
   {\left\| U_n \right\|}^p_{L_p(K_{1/n^2})}=  \frac{1}{p+1}  \int_{ \frac{1}{n^2} }^{1} \left((f(x))^{p+1} - (ax^k)^{p+1}\right) |P_n^{(\omega,\sigma)}(1-x)|^p   \, dx.
\end{align*}
Therefore,
\begin{align*}
   {\left\| U_n \right\|}^p_{L_p(K_{1/n^2})} \leq  \frac{1}{p+1}  \int_{ \frac{1}{n^2} }^{1} (f(x))^{p+1}  |P_n^{(\omega,\sigma)}(1-x)|^p   \, dx.
\end{align*}
Applying certain properties of Jacobi polynomials $P_n^{(\omega,\sigma)}$
 verified in \cite{Szego}, (7.32.5), p. 169, and using $k f(x) \geq x f'(x)$, one can show that there exists $\Lambda >0$ such that
\begin{align*}
    \frac{1}{p+1}  \int_{ \frac{1}{n^2} }^{1} (f(x))^{p+1}  |P_n^{(\omega,\sigma)}(1-x)|^p   \, dx \leq \Lambda (\eta_n')^{p+1} n^{\omega p -4 - 2p}
\end{align*}
whenever  $\omega p + p/2 -2 > 2k(p+1)$. See \cite{TB} for details. Thus,
\begin{align}\label{oneway}
  {\left\| U_n \right\|}^p_{L_p(K_{1/n^2})} \leq \Lambda (\eta_n')^{p+1} n^{\omega p -4 - 2p}.
\end{align}
\indent On the other hand, by the definition of $K$,
 \begin{align}\label{revway1}
 {\left\| \frac{\partial U_n}{ \partial y} \right\|}^p_{L_p(K)}=\int_{K}   |P_n^{(\omega,\sigma)}(1-x)|^p  \, dxdy \geq \int_{0}^{\frac{1}{n^2}} \int_{ax^k}^{f(x)}   |P_n^{(\omega,\sigma)}(1-x)|^p  \, dy dx.
 \end{align}
Since $cf(x) \geq a x^k $, we can write
\begin{align}\label{revway2}
  \int_{0}^{\frac{1}{n^2}} \int_{ax^k}^{f(x)}   |P_n^{(\omega,\sigma)}(1-x)|^p  \, dy dx \geq  \int_{0}^{\frac{1}{n^2}} (1-c)f(x)  |P_n^{(\omega,\sigma)}(1-x)|^p  \, dx.
 \end{align}
By making the change of variable $x=\frac{z}{2n^2}$, we obtain
 \begin{align}\label{m3e21}
 \int_{0}^{\frac{1}{n^2}} f(x)  |P_n^{(\omega,\sigma)}(1-x)|^p  \, dx = \frac{1}{2n^2}\int_{0}^{2} f(\tau_n(z)) |P_n^{(\omega,\sigma)}(1-\tau_n(z))|^p  \, dz,
 \end{align}
 where $\tau_n(z)=\frac{z}{2n^2}$.  By the formula of Mehler-Heine type (see \cite{Szego}, Theorem 8.1.1.)
\begin{align*}
 \frac{1}{2n^2}  |P_n^{(\omega,\sigma)}(1-\tau_n(z))|^p   \geq \frac{n^{\omega p}}{4^pn^2} (4(z/2)^{-\omega} J_{\omega}(z) - 1/\Gamma(\omega+2))^p
 \end{align*}
for $\omega >0$, and all sufficiently large $n$. Here $J_{\omega}(z)$ is the Bessel functions of the first kind. Since
\begin{align*}
\min_{z \in [0,2]} \{ (z/2)^{-\omega} J_{\omega}(z)\} \geq \min_{z \in [0,2]} \left\{ \frac{1}{\Gamma(\omega+1)} - \frac{z^2}{4\Gamma(\omega+2)} \right\}=\frac{\omega}{\Gamma(\omega+2)},
 \end{align*}
we have
\begin{align}\label{m3e24}
 \frac{1}{2n^2}\int_{0}^{2} f(g_n(z))  |P_n^{(\omega,\sigma)}(1-g_n(z))|^p  \, dz \geq \left(\frac{4\omega-1}{4\Gamma(\omega+2)}\right)^p n^{\omega p-2}  \int_{0}^{2} f(g_n(z))  \, dz.
 \end{align}
Then integration by parts shows that
 \begin{align*}
 \int_{0}^{2} f(g_n(z))   \, dz =   2f(1/n^2)  - \frac{1}{2n^2} \int_{0}^{2} z f'(g_n(z)) \, dz.
 \end{align*}
Since $kf(x) \geq xf'(x)$, this leads to
 \begin{align}\label{m3e26}
 \int_{0}^{2} f(g_n(z))   \, dz \geq   \frac{2f(1/n^2)}{k+1}  \geq \frac{2\eta_n'}{n^{2}k(k+1)}.
 \end{align}
From (\ref{revway1})-(\ref{m3e26}) we see that
 \begin{align*}
 {\left\| \frac{\partial U_n}{ \partial y} \right\|}^p_{L_p(K)}=\int_{K}   |P_n^{(\omega,\sigma)}(1-x)|^p  \, dtdy \geq \left(\frac{4\omega-1}{4\Gamma(\omega+2)}\right)^p \frac{2 n^{\omega p-4}\eta_n'}{k(k+1)}.
 \end{align*}
This last inequality together with (\ref{oneway}) imply that there exists a positive constant $\Theta$ depending only on $f$, $k$  and $\omega$ such that, for each $n$,
\begin{align*}
 {\left\| \frac{\partial U_n}{ \partial y} \right\|}_{L_p(K)} \geq \Theta \frac{n^2}{\eta_n'} {\left\| U_n \right\|}_{L_p(K_{1/n^2})}.
 \end{align*}
An application of Lemma \ref{lem2} completes the proof.
\section{Concluding remarks}
At the end of this article, we would like to make some comments related to the techniques used in this work and the relationship between the results and the known results. In addition, we will provide some consequences of the results obtained.
\begin{itemize}
  \item Let $k \in \mathbb{N}$, $k \geq 2$, and $0<a<1$. If $1<r \leq k$, then we can apply the main result (Theorem \ref{mainthm}) to the following domains:
      \begin{itemize}
        \item[$\ast$] $D_1=\{(x,y) \in \mathbb{R}^2 : 0 < x \leq 1, \quad ax^k \leq y \leq b_1x^r \}$,
        \item[$\ast$] $D_2=\{(0,0)\} \cup \{(x,y) \in \mathbb{R}^2 : 0 < x \leq 1, \quad ax^k \leq y \leq x^r \ln (-\ln ( b_2 x))\}$,
        \item[$\ast$] $D_3=\{(0,0)\} \cup \{(x,y) \in \mathbb{R}^2 : 0 < x \leq 1, \quad ax^k\leq y \leq -x^r \ln (b_3 x)\}$,
        \item[$\ast$] $D_4=\{(0,0)\} \cup \{(x,y) \in \mathbb{R}^2 : 0 < x \leq 1, \quad ax^k\leq y \leq x^r (-\ln (b_4 x))^c\}$,
      \end{itemize}
    for  appropriately adjusted constants $b_1,b_2,b_3,b_4$ and $c$. In particular, the $L^p$ Markov exponent of all of the above domains is equal to $2r$. However, $D_1$ satisfies $L^p$ Markov type inequality with exponent $2r$ unlike other domains.
  \item The techniques used in this paper can be applied to the more general domains. Let $1>d>0$. Let $f,g : [0,1] \rightarrow \mathbb{R}$ be continuous functions so that $f'(0)=f(0)=g(0)=0$. Suppose that there exist constants $a$ and $k$ such that $a>0$, $k \in \mathbb{N}$, $k \geq 2$,  $(f)^{1/k}$ is concave on the interval $(0,d)$, $f$ and $(g)^{1/k}$ are convex on the interval $(0,d)$, and  $f(x) > ax^k > g(x)$ for $x \in (0,d)$. If $E_{d}:=\{ (x,y) \in \mathbb{R}^2 : d \leq x \leq 1, \, g(x) \leq y \leq f(x) \}$ is locally Lipschitzian, then
      $$ \mu_{p}(E) = \inf \{ \tau >0 : \, \exists_{C>0} \, \forall_{n \in \mathbb{N}} \, \, n^2 \leq  Cf'(1/n^2) n^\tau \},$$
     where $E=\{ (x,y) \in \mathbb{R}^2 : 0 \leq x \leq 1, \, g(x) \leq y \leq f(x) \}$ .
   \item The techniques we use also work when $k = 0$ or $k = 1$, but in these cases, known results can be used to determine the $L^p$ Markov exponent (see \cite{Kroo}).
   \item For certain functions $f$, a lower bound for $L^p$ Markov exponent ($1 \leq p < \infty$) can be obtained by using the result that says that the exponent in the $L^p$  norm cannot be less than the exponent in the supremum norm (see \cite{BK}).
   \item The work is devoted to two-dimensional sets. Nevertheless, in many cases the problem of finding the $L^p$ Markov exponent for higher dimensional domains can be reduced to a two dimensional situation. Here are some examples:
        \begin{itemize}
        \item[$\ast$] $\{(x,y) \in \mathbb{R}^2 : 0 \leq x \leq 1, \quad ax^k \leq y \leq x^r \} \times [0,1]^m$,
        \item[$\ast$] $\{(x,y) \in \mathbb{R}^{m+1} :  |x| \leq 1, \quad a|x|^k \leq y \leq |x|^r \}$,
        \item[$\ast$] $\{x \in \mathbb{R}^{m} : a \leq |x_1|^k + |x_2|^k + \ldots +  |x_m|^k, \quad |x_1|^r + |x_2|^r + \ldots +  |x_m|^r<1 \}$,
        \item[$\ast$] $\{(x,y,z) \in \mathbb{R}^{3} : 0 \leq x \leq 1, \, ax^k \leq y \leq x^r, \,  Ax+By + C_1 \leq z \leq Ax+By+C_2  \}$,
      \end{itemize}
for every $k,m \in \mathbb{N}$, $r, A, B, C_1, C_2 \in \mathbb{R}$ so that $k \geq 2$, $r \leq k$, $0<a<1$, $C_1< C_2$  and $Ax+By+C_1\geq 0$ if $0 \leq x \leq 1$, $ax^k \leq y \leq x^r$. Here $|x|:= \sqrt{x^2_1 + x^2_2 + \ldots + x^2_m}$.
\end{itemize}

\section*{Acknowledgment}
The author was supported by the Polish National Science Centre (NCN) Opus grant no. 2017/25/B/ST1/00906.







\end{document}